\titleformat*{\subsection}{\Large\bfseries}
\titleformat*{\subsubsection}{\large\bfseries}
\titleformat*{\paragraph}{\large\bfseries}
\titleformat*{\subparagraph}{\large\bfseries}
\renewcommand{\@seccntformat}[1]{\csname the#1\endcsname. }
\renewenvironment{abstract}{%
    \if@twocolumn
      \section*{\abstractname}%
    \else 
      \begin{center}%
        {\bfseries \Large\abstractname\vspace{\z@}}
      \end{center}%
      \quotation
    \fi}
    {\if@twocolumn\else\endquotation\fi}
\theoremstyle{plain}
\newtheorem{thm}{Theorem}[section]
\newtheorem{lem}[thm]{Lemma}
\theoremstyle{definition}
\newtheorem{defn}[thm]{Definition}
\theoremstyle{numberfirst}
\newcommand{\N}{\mathbb{N}}
\newcommand{\thistheoremname}{}
\newtheorem*{genericthm*}{\thistheoremname}
\newenvironment{namedthm*}[1]
  {\renewcommand{\thistheoremname}{#1}%
   \begin{genericthm*}}
  {\end{genericthm*}}
\date{\vspace{-5ex}}
\begin{document}

\title{\textbf{Combined exponential patterns in multiplicative $IP^{\star}$ sets}}

\author{ 
Pintu Debnath\\  \textit{pintumath1989@gmail.com}\footnote{ Department of Mathematics, Basirhat College, Basirhat
-743412, North 24th parganas, West Bengal, India.}\\
\and	
	Sayan Goswami\\  \textit{sayangoswami@imsc.res.in}\footnote{The Institute of Mathematical Sciences, A CI of Homi Bhabha National Institute, CIT Campus, Taramani, Chennai 600113, India.}
	}

\maketitle

\begin{abstract}
$IP$ sets play fundamental role in arithmetic Ramsey theory. A set is called an additive $IP$ set  if it is of the form $FS\left(\langle x_{n}\rangle_{n\in \mathbb{N}}\right)=\left\{ \sum_{t\in H}x_{t}:H\right.$ is a nonempty finite subset of $\left.\mathbb{N}\right\}$, whereas it is called a multiplicative $IP$
set if it is of the form
$FP\left(\langle x_{n}\rangle_{n\in \N}\right)=\left\{ \prod_{t\in H}x_{t}:H\right.$ is a nonempty finite subset of $\left. \mathbb{N}\right\}$
for some injective sequence $\langle x_{n}\rangle_{n\in \mathbb{N}}.$ An additive
$IP^{\star}$ (resp. multiplicative $IP^{\star}$) set is a set which
intersects every additive $IP$ set (resp. multiplicative $IP$ set).
In \cite{key-1}, V. Bergelson and N. Hindman studied how rich additive
$IP^{\star}$ sets are. They proved additive $IP^{\star}$ sets ($AIP^{\star}$
in short) contain finite sums and finite products of a single sequence.
An analogous study was made by A. Sisto in\cite{key-3}, where he proved that  multiplicative $IP^{\star}$
sets ($MIP^{\star}$ in short)  contain exponential tower\footnote{will be defined later}
and finite product of a single sequence. However exponential patterns can be defined in two different ways. In this article we will prove
that $MIP^{\star}$ sets contain two different exponential
patterns and finite product of a single sequence. This immediately
improves the result of A. Sisto. We also construct a $MIP^\star$ set, not arising from the recurrence of measurable dynamical systems. Throughout our work we will use the machinery
of the algebra of the Stone-\v{C}ech Compactification of $\mathbb{N}$. 
\end{abstract}

\section{Introduction}

The origin of $IP$ sets dates back to Hindman's work \cite{key-4},
where he proved that for any finite coloring of the set of positive
integers $\mathbb{N}$, there exists a monochromatic copy of an additive
$IP$ set. Here ``coloring'' means disjoint partition, and a pattern
``monochromatic'' means if it is included in one piece of the partition.
Passing to the map $n\rightarrow2^{n}$ for each $n\in\mathbb{N}$,
we immediately have a monochromatic copy of a multiplicative $IP$
set. Let $\beta\mathbb{N}$ be the set of all ultrafilters\footnote{For details on the algebra of ultrafilters we refer the book \cite{key-2}
of N. Hindman and D. Strauss.} over $\mathbb{N}$, and $E\left(\beta\mathbb{N},+\right)$ (resp.
$E\left(\beta\mathbb{N},\cdot\right)$) be the collection of all idempotents
in $\left(\beta\mathbb{N},+\right)$ (resp. $\left(\beta\mathbb{N},\cdot\right)$).
One can show that a set $A$ is additive $IP$ (resp. multiplicative
$IP$ set) if there exists $p\in E\left(\beta\mathbb{N},+\right)$
(resp. $p\in E\left(\beta\mathbb{N},\cdot\right)$) such that $A\in p.$
Hence a set $A$ is $AIP^{\star}$ (resp. $MIP^{\star}$) if and only
if $A\in p$ for all $p\in E\left(\beta\mathbb{N},+\right)$ (resp.
$p\in E\left(\beta\mathbb{N},\cdot\right)$). Define $\mathcal{P}_f(\N)$ to be the set of all nonempty finite subsets of $\N$. For any $IP$ set $FS\left(\langle x_{n}\rangle_{n}\right)$,
a sum subsystem of $FS\left(\langle x_{n}\rangle_{n}\right)$ is of
the form $FS\left(\langle y_{n}\rangle_{n}\right)$, where for each
$n\in\mathbb{N},$ $y_{n}$ is defined as follows.
\begin{itemize}
\item There exists a sequence $\langle H_{n}\rangle_{n}$ in $\mathcal{P}_{f}(\mathbb{N})$
satisfying $H_{i}\cap H_{j}=\emptyset$ for each $i\neq j$, and 
\item $y_{n}=\sum_{t\in H_{n}}x_{t}.$ 
\end{itemize}
In \cite{key-1}, V. Bergelson and N. Hindman proved the following
result, which addresses that any $AIP^{\star}$ set contains combined
additive and multiplicative patterns. 
\begin{thm}
\label{aip}Let $A$ be an $AIP^{\star}$ set, and $\langle x_{n}\rangle_{n\in\mathbb{N}}$
be any sequence. Then there exists a sum subsystem $FS\left(\langle y_{n}\rangle_{n\in\mathbb{N}}\right)$
of $FS\left(\langle x_{n}\rangle_{n\in\mathbb{N}}\right)$ such that
$FS\left(\langle y_{n}\rangle_{n\in\mathbb{N}}\right)\cup FP\left(\langle y_{n}\rangle_{n\in\mathbb{N}}\right)\subset A.$ 
\end{thm}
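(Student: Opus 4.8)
The plan is to prove Theorem~\ref{aip} by exploiting the algebraic characterization of $AIP^\star$ sets as members of every additive idempotent, combined with the fact that the additive $IP$-filter generated by a sequence lives inside a suitable idempotent ultrafilter. First I would fix an idempotent $p \in E(\beta\N,+)$ with $FS(\langle x_n\rangle_n) \in p$; such a $p$ exists by the standard fact (see \cite{key-2}) that for any sequence the family of tails $\{FS(\langle x_n\rangle_{n \ge m}) : m \in \N\}$ has the finite intersection property and generates a closed subsemigroup of $(\beta\N,+)$, which by the Ellis--Namakura theorem contains an idempotent. Since $A$ is $AIP^\star$, we have $A \in p$, and in fact $A \cap FS(\langle x_n\rangle_{n \ge m}) \in p$ for every $m$.

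The core of the argument is then a routine but careful recursive construction of the blocks $H_n$, using idempotency at each stage. Recall that $p + p = p$ means that for any $B \in p$ the set $B^\star = \{x : -x + B \in p\} \in p$, and that for $x \in B^\star$ one has $-x+B \in p$ as well. The plan is to build, by induction on $n$, finite sets $H_1 < H_2 < \cdots$ (i.e. $\max H_n < \min H_{n+1}$) and elements $y_n = \sum_{t \in H_n} x_t$ such that at each stage the relevant ``starred'' set controlling all finite sums $\sum_{t \in F} y_t$ with $F \subseteq \{1,\dots,n\}$ remains in $p$. Since $A$ is $AIP^\star$ but \emph{not} necessarily $MIP^\star$, the multiplicative patterns $FP(\langle y_n\rangle_n)$ do not come for free from $p$; instead one uses the following observation: each finite product $\prod_{t \in F} y_t$ is itself a sum of products of the original $x_t$'s distributed over the disjoint blocks $\{H_t : t \in F\}$, and one arranges the blocks so that these ``cross terms'' again land in a set already forced to be in $p$. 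Concretely, at step $n+1$ one intersects finitely many translates of $A$ (by the finitely many partial sums and partial products already constructed) with a far-out tail $FS(\langle x_n\rangle_{n \ge m})$; this intersection is in $p$, hence nonempty, hence meets the tail, yielding the next block $H_{n+1}$ with all required membership relations.

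I expect the main obstacle to be bookkeeping: one must simultaneously guarantee that \emph{all} finite sums $\sum_{t \in F} y_t$ and \emph{all} finite products $\prod_{t \in F} y_t$ (for $F$ ranging over finite subsets of the already-constructed indices, together with the new index $n+1$) lie in $A$, and this requires the inductive hypothesis to carry along a finite list of ``starred'' sets $\bigcap \{(-s + A)^\star : s \text{ a partial sum}\} \cap \bigcap\{(-\pi + A)^\star : \pi \text{ a partial product}\}$, all verified to be in $p$. The delicate point is that a new product $y_{n+1} \cdot \prod_{t\in F} y_t$ mixes the new block with old blocks, so when choosing $H_{n+1}$ one must ensure $y_{n+1}$ lands not in $A$ directly but in the appropriate translate $-\prod_{t\in F}y_t \cdot(\text{something}) + A$ — and here one should note that multiplication by a fixed positive integer is a continuous self-map of $\beta\N$ that need not fix $p$, so one cannot naively pull the multiplicative structure through the idempotent. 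The resolution, as in \cite{key-1}, is to observe that for fixed $k \in \N$ the set $\{x : kx \in A\}$ lies in $p$ whenever $A \in p$ and $A$ is $AIP^\star$ (since $AIP^\star$ sets are closed under ``division'' in the weak sense needed here, or more directly: $\{x: kx \in A\}$ is again $AIP^\star$), so all the sets we must keep in $p$ genuinely are in $p$. Once this is set up, the recursion runs, and $FS(\langle y_n\rangle_n) \cup FP(\langle y_n\rangle_n) \subseteq A$ follows by taking, for any given finite $F$, the stage $n = \max F$ of the construction.
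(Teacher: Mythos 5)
The paper does not actually prove Theorem~\ref{aip}; it is quoted from Bergelson--Hindman \cite{key-1}, so there is no in-paper proof to compare against. Your sketch is essentially the standard argument from that source, and it is also structurally parallel to the paper's own proof of Theorem~\ref{mipnew}: fix an additive idempotent $p$ in the compact subsemigroup $\bigcap_{m} cl\left(FS\left(\langle x_{n}\rangle_{n\geq m}\right)\right)$, note $A\in p$ because $A$ is $AIP^{\star}$, and run the Galvin--Glazer recursion keeping the sums under control via the sets $\left(-s+A\right)^{\star}$ and the products under control via the one genuinely new ingredient, namely that $A/k=\left\{ x:kx\in A\right\}$ is again $AIP^{\star}$ (hence in $p$) for every $k$. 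You correctly identify that last lemma as the crux, and with it the recursion closes: at stage $N+1$ one intersects the tail $FS\left(\langle x_{n}\rangle_{n\geq m}\right)$ with $A^{\star}$, the sets $\left(-s+A\right)^{\star}$ for the finitely many partial sums $s$, and the sets $A/\pi$ for the finitely many partial products $\pi$, all of which lie in $p$.

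Two small points to clean up. First, the digression about expanding $\prod_{t\in F}y_{t}$ as a sum of ``cross terms'' over the blocks $H_{t}$ is a dead end and plays no role; the products are handled entirely by the $A/k$ lemma, not by any arrangement of the blocks. Second, your bookkeeping list $\bigcap\left\{ \left(-\pi+A\right)^{\star}:\pi\text{ a partial product}\right\}$ uses the wrong translate: the set you need for a partial product $\pi$ is the multiplicative quotient $A/\pi=\left\{ x:\pi x\in A\right\}$, not an additive translate, and it needs no starring, since $A/\left(\pi y_{N+1}\right)$ is automatically $AIP^{\star}$ and hence in $p$ at the next stage. With those corrections the proposal is a correct (if schematic) proof.
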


An immediate question appears what about $MIP^{\star}$ sets? In \cite{key-3},
A. Sisto was able to show that these sets contain combined multiplicative
and exponential patterns.  To state his theorem  explicitly, we need the
following definitions. 
\begin{defn}
For any sequences $\langle x_{n}\rangle_{n=1}^{\infty},$ define 
\end{defn}

\begin{enumerate}
\item for any $N\in\mathbb{N}$, $EXP_{1}\left(\langle x_{n}\rangle_{n=1}^{N}\right)=\left\{ x_{i_{k}}^{x_{i_{k-1}}^{\iddots^{x_{i_{1}}}}}:1\leq i_{1}<i_{2}<\cdots<i_{k}\leq N\right\} ,$
\item for any $N\in\mathbb{N}$, $EXP_{2}\left(\langle x_{n}\rangle_{n=1}^{N}\right)=\left\{ x_{i_{1}}^{x_{i_{2}}\cdots x_{i_{k}}}:1\leq i_{1}<i_{2}<\cdots<i_{k}\leq N\right\} ,$
\item $EXP_{1}\left(\langle x_{n}\rangle_{n=1}^{\infty}\right)=\left\{ x_{i_{k}}^{x_{i_{k-1}}^{\iddots^{x_{i_{1}}}}}:1\leq i_{1}<i_{2}<\cdots<i_{k},k\in\mathbb{N}\right\} ,$
\item $EXP_{2}\left(\langle x_{n}\rangle_{n=1}^{\infty}\right)=\left\{ x_{i_{1}}^{x_{i_{2}}\cdots x_{i_{k}}}:1\leq i_{1}<i_{2}<\cdots<i_{k},k\in\mathbb{N}\right\} .$
\end{enumerate}
The following Corollary of Sisto's addresses exponential properties
of $MIP^{\star}$ sets.
\begin{thm}
\label{mip} \textup{\cite[Corollary 16]{key-3}} Let $A$ be a $MIP^{\star}$set.
Then there exist sequences $\langle x_{n}\rangle_{n=1}^{\infty}$
and $\langle y_{n}\rangle_{n=1}^{\infty}$ such that
\begin{enumerate}
\item $FP\left(\langle x_{n}\rangle_{n=1}^{\infty}\right)\bigcup EXP_{1}\left(\langle x_{n}\rangle_{n=1}^{\infty}\right)\subseteq A,$
and 
\item $FS\left(\langle y_{n}\rangle_{n=1}^{\infty}\right)\bigcup EXP_{2}\left(\langle y_{n}\rangle_{n=1}^{\infty}\right)\subseteq A.$
\end{enumerate}
\end{thm}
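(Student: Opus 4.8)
The plan is to reduce the ``base'' of each exponential pattern to a membership question about sets of the shape $\{m:b^m\in A\}$ and $\{m:m^e\in A\}$, to note that these inherit strong largeness from $A$, and then to run two Galvin--Glazer-style recursions. So I would first record three facts. \emph{(i)} For each $e\in\mathbb{N}$ the map $n\mapsto n^e$ is an endomorphism of $(\mathbb{N},\cdot)$, so its continuous extension carries $E(\beta\mathbb{N},\cdot)$ into itself; since $A\in p$ for every $p\in E(\beta\mathbb{N},\cdot)$, also $\{m:m^e\in A\}$ belongs to every $p\in E(\beta\mathbb{N},\cdot)$, i.e. $\{m:m^e\in A\}$ is again $MIP^{\star}$. \emph{(ii)} Symmetrically, for $b\geq 2$ the map $m\mapsto b^m$ is a homomorphism $(\mathbb{N},+)\to(\mathbb{N},\cdot)$, so its extension carries $E(\beta\mathbb{N},+)$ into $E(\beta\mathbb{N},\cdot)$, and the same reasoning gives that $\{m:b^m\in A\}$ lies in every additive idempotent, i.e. it is $AIP^{\star}$. \emph{(iii)} $A$ itself is an additive $IP$ set: otherwise $\mathbb{N}\setminus A$ would be $AIP^{\star}$, and Theorem \ref{aip} (applied to the sequence $\langle n\rangle$) would produce an injective $\langle y_n\rangle$ with $FP(\langle y_n\rangle)\subseteq\mathbb{N}\setminus A$, contradicting that $A$ meets every multiplicative $IP$ set.

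For the first conclusion I would fix a nonprincipal $p\in E(\beta\mathbb{N},\cdot)$, so $A\in p$, and put $A^{\star}=\{n\in A:n^{-1}A\in p\}\in p$. I then build $x_1<x_2<\cdots$ with all $x_n\geq 2$, maintaining as induction hypotheses that $FP(\langle x_i\rangle_{i<n})\subseteq A^{\star}$ and $EXP_1(\langle x_i\rangle_{i<n})\subseteq A$. Given $x_1,\dots,x_{n-1}$, the set
\[
B_n=A^{\star}\cap\bigcap_{D\in FP(\langle x_i\rangle_{i<n})\cup\{1\}}D^{-1}A^{\star}\cap\bigcap_{e\in EXP_1(\langle x_i\rangle_{i<n})\cup\{1\}}\{m:m^e\in A\}
\]
is a finite intersection of members of $p$: the sets $D^{-1}A^{\star}$ because $D\in A^{\star}$, and the sets $\{m:m^e\in A\}$ by fact \emph{(i)}. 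Hence $B_n\in p$ is infinite and I pick $x_n\in B_n$ with $x_n>x_{n-1}$. The usual propagation argument keeps $FP(\langle x_i\rangle_{i\leq n})\subseteq A^{\star}$; and since in a tower $x_{i_{k}}^{x_{i_{k-1}}^{\iddots^{x_{i_{1}}}}}$ the variable of largest index is the base, such a pattern is completed exactly when $x_{i_k}$ is chosen, at which moment the exponent $e$ is an already-fixed element of $EXP_1(\langle x_i\rangle_{i<i_k})\cup\{1\}$, so $x_{i_k}\in B_{i_k}\subseteq\{m:m^e\in A\}$ gives $x_{i_k}^{\,e}\in A$. Thus $FP(\langle x_n\rangle)\cup EXP_1(\langle x_n\rangle)\subseteq A$.

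For the second conclusion I would use fact \emph{(iii)} to fix an additive idempotent $q$ with $A\in q$, and put $A^{\star}=\{n\in A:-n+A\in q\}\in q$. Then I build $y_1<y_2<\cdots$ with $y_1\geq 2$, maintaining $FS(\langle y_i\rangle_{i<n})\subseteq A^{\star}$ and $EXP_2(\langle y_i\rangle_{i<n})\subseteq A$. The key observation is that in a pattern $y_{i_1}^{y_{i_2}\cdots y_{i_k}}$ the base has the smallest index but the exponent is a product of later terms, so when the largest-index term $y_{i_k}$ is added, the quantity $b:=y_{i_1}^{y_{i_2}\cdots y_{i_{k-1}}}$ is an already-chosen integer $\geq 2$ and the only remaining requirement is $y_{i_k}\in\{m:b^m\in A\}$ --- a set which, by fact \emph{(ii)}, is $AIP^{\star}$ and hence lies in $q$. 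Consequently, given $y_1,\dots,y_{n-1}$, the set
\[
B_n=A^{\star}\cap\bigcap_{\emptyset\neq F\subseteq\{1,\dots,n-1\}}\Bigl(-\sum_{i\in F}y_i+A^{\star}\Bigr)\cap\bigcap_{\substack{i_1<n\\ P\in FP(\langle y_j\rangle_{i_1<j<n})\cup\{1\}}}\{m:(y_{i_1}^{\,P})^{m}\in A\}
\]
is a finite intersection of members of $q$ (the additive translates since $\sum_{i\in F}y_i\in FS(\langle y_i\rangle_{i<n})\subseteq A^{\star}$; the last family by fact \emph{(ii)}, each $y_{i_1}^{\,P}$ being $\geq 2$), so $B_n\in q$, and I pick $y_n\in B_n$ with $y_n>y_{n-1}$. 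The standard verification gives $FS(\langle y_n\rangle)\subseteq A$, while by construction $(y_{i_1}^{\,P})^{y_{i_k}}=y_{i_1}^{y_{i_2}\cdots y_{i_k}}\in A$ whenever $i_1<\cdots<i_k$; hence $FS(\langle y_n\rangle)\cup EXP_2(\langle y_n\rangle)\subseteq A$.

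I expect the main obstacle to be exactly the observation behind the second recursion: recognizing that, once the largest-index variable of an $EXP_2$ pattern is being selected, the exponential constraint is of the form ``$y_n\in\{m:b^m\in A\}$'' for a fixed base $b$, and that this set is \emph{additive} $IP^{\star}$, so it belongs to the same additive idempotent $q$ that governs the $FS$-constraints --- this compatibility is what lets additive and exponential conditions be imposed at once. The remainder is a routine marriage of the Galvin--Glazer recursion with facts \emph{(i)} and \emph{(ii)}, modulo bookkeeping: keeping all bases $\geq 2$ so the transfer facts apply, checking the two propagation lemmas, and using nonprincipal idempotents so each $B_n$ is infinite.
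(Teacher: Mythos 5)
Your argument is correct, and the machinery is the same one this paper relies on: your facts \emph{(i)} and \emph{(ii)} are exactly the transfer lemmas the paper invokes (``$B_{1}=\{n:n^{x_{1}}\in A\}$ is $MIP^{\star}$'' and Sisto's Lemma 13 giving that $\{m:b^{m}\in A\}$ is $AIP^{\star}$), and the two recursions are the expected Galvin--Glazer constructions. Note, however, that the paper does not prove Theorem \ref{mip} at all --- it cites it from Sisto --- and its own related proof (of the stronger Theorem \ref{mipnew}) takes a genuinely different organizational route: instead of your two independent idempotents (a nonprincipal multiplicative one for $FP\cup EXP_{1}$ and an additive one containing $A$, secured by your fact \emph{(iii)}, for $FS\cup EXP_{2}$), the paper fixes a single $p\in cl\left(E(\beta\mathbb{N},+)\right)\cap E(\beta\mathbb{N},\cdot)$. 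Such a $p$ makes the multiplicative Galvin--Glazer step available \emph{and} guarantees that every $AIP^{\star}$ set meets every member of $p$, which is precisely what allows the $EXP_{1}$, $EXP_{2}$ and $FP$ constraints to be imposed on one and the same sequence; your two-sequence version cannot be merged without that device, but it fully suffices for Theorem \ref{mip} as stated. Two minor points to tighten: in fact \emph{(iii)} the sequence produced by Theorem \ref{aip} from $\langle n\rangle$ need not be injective, so to contradict $MIP^{\star}$-ness you should either extract an injective subsequence of the $y_{n}$ (or, if the range is finite, replace a constant value $c$ by $\langle c^{2^{i}}\rangle_{i}$), or argue more directly that $\overline{A}$ is a clopen neighbourhood of some point of $cl\left(E(\beta\mathbb{N},+)\right)\cap E(\beta\mathbb{N},\cdot)$ and hence meets $E(\beta\mathbb{N},+)$; and you should record once that the idempotents used are nonprincipal so that each $B_{n}$ is infinite and an increasing choice of $x_{n}$, $y_{n}$ is possible.
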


A natural question appears whether it is possible to provide a joint
extension of both (1) and (2) in Theorem \ref{mip}. That means, for
each $n\in\mathbb{N},$ can we choose $x_{n}=y_{n}$ in Theorem \ref{mip}. 

In this article we  provide a partial answer to this question
by proving the following theorem.
\begin{thm}
\label{mipnew} Let $A$ be a $MIP^{\star}$
set. Then there exist sequences $\langle x_{n}\rangle_{n=1}^{\infty}$
such that $$FP\left(\langle x_{n}\rangle_{n=1}^{\infty}\right)\bigcup EXP_{1}\left(\langle x_{n}\rangle_{n=1}^{\infty}\right)\cup EXP_{2}\left(\langle x_{n}\rangle_{n=1}^{\infty}\right)\subseteq A.$$
\end{thm}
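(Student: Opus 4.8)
The plan is to work in the Stone–Čech compactification $\beta\mathbb{N}$ with its multiplicative structure, exploiting the characterization that a set is $MIP^{\star}$ iff it belongs to every multiplicative idempotent. Fix a minimal idempotent $p\in E(\beta\mathbb{N},\cdot)$; since $A$ is $MIP^{\star}$ we have $A\in p$, and also $A^{\star}=\{n\in A : n^{-1}A\in p\}\in p$, where $n^{-1}A=\{m : nm\in A\}$. The standard Galvin–Glazer argument then produces a sequence $\langle x_n\rangle$ with $FP(\langle x_n\rangle)\subseteq A$; the real work is to interleave the two exponential conditions into this construction. The key observation (already implicit in Sisto's work) is that for a fixed base $b$, the exponentiation maps $n\mapsto b^n$ and $n\mapsto n^b$ behave well with respect to multiplicative idempotents: if $q$ is a multiplicative idempotent then so is the image of $q$ under suitable continuous maps, and more usefully, sets of the form $\{n : b^n\in B\}$ and $\{n : n^b\in B\}$ can be shown to lie in an appropriate idempotent whenever $B$ does. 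I would isolate this as a lemma: for any $MIP^{\star}$ set $B$ and any $b\in\mathbb{N}$ with $b\geq 2$, both $\{n : b^n\in B\}$ and (on the relevant piece) $\{n : n^b\in B\}$ are again $MIP^{\star}$, or at least belong to $p$.

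Granting such a lemma, I would run a single recursive construction choosing $x_1,x_2,\dots$ one at a time, maintaining at stage $m$ a set $A_m\in p$ (a "$\star$-like" refinement of $A$) together with the finite partial patterns already committed. To choose $x_{m+1}$ I would intersect $A_m$ with finitely many preimages: for every partial product $\pi$ of $\{x_1,\dots,x_m\}$ I need $\pi\cdot x_{m+1}\in A$ (this is the usual $FP$ bookkeeping); for every already-formed $EXP_1$-tower $\tau$ over a subset of $\{x_1,\dots,x_m\}$ I need $x_{m+1}^{\tau}\in A$, i.e.\ $x_{m+1}\in\{n : n^{\tau}\in A\}$, which by the lemma is a large set; and for $EXP_2$, for every exponent-product $e$ formed from $\{x_1,\dots,x_m\}$ I need $x_{m+1}^{e}\in A$ and also, for every base $x_j$ already chosen with $j\leq m$, I need $x_j^{x_{m+1}\cdot(\text{earlier exponents})}\in A$, i.e.\ $x_{m+1}\in\{n : x_j^{n\cdot(\cdots)}\in A\}$, again large by the lemma. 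Since all of these are finitely many constraints and each defines a member of $p$ (or is co-small), their intersection with $A_m$ is nonempty — indeed in $p$ — so a valid $x_{m+1}$ exists; then set $A_{m+1}$ to be this intersection, further shrunk by the $\star$-operation so that the next stage can continue.

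The subtle point that makes this a \emph{partial} answer, and where I expect the main obstacle, is the compatibility of $EXP_1$ and $EXP_2$ at the level of which idempotents are available. $EXP_1$-towers grow downward in the index (the innermost exponent has the smallest index), while $EXP_2$ products are symmetric; moreover an $EXP_1$ pattern like $x_{i_k}^{x_{i_{k-1}}}$ and an $EXP_2$ pattern like $x_{i_1}^{x_{i_2}x_{i_3}}$ impose constraints on the \emph{same} elements $x_n$ but in different roles (as base vs.\ as part of an exponent tower vs.\ as a factor in an exponent product). One must check that the finitely many preimage sets invoked at each stage genuinely all lie in the \emph{one} fixed idempotent $p$ — this is why a single $p$ must be chosen at the outset and why the lemma must be phrased so that $\{n : n^\tau\in B\}$ and $\{n : b^n\in B\}$ land in $p$ \emph{simultaneously}. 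I would verify the lemma by using that $p$ is multiplicative and that exponentiation intertwines multiplication in the exponent with multiplication of values ($b^{mn}=(b^m)^n$), so that $\{n:b^n\in B\}$ inherits multiplicative-idempotent membership; the case $\{n:n^b\in B\}$ uses $(mn)^b=m^b n^b$ analogously. Once these algebraic identities are in place the construction goes through routinely, and collecting the sequence $\langle x_n\rangle$ gives $FP\cup EXP_1\cup EXP_2\subseteq A$ as claimed.
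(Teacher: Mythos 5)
Your overall scheme (a Galvin--Glazer recursion in $(\beta\mathbb{N},\cdot)$, closing off at each stage under finitely many preimage constraints coming from $FP$, $EXP_1$ and $EXP_2$) is the right shape and matches the paper's construction, but your key lemma is wrong at exactly the point that carries the real content of the theorem. The set $\left\{ n:n^{b}\in B\right\}$ is indeed $MIP^{\star}$ when $B$ is, via $(mn)^{b}=m^{b}n^{b}$; that half is fine and handles the $EXP_1$ constraints. However, $\left\{ n:b^{n}\in B\right\}$ is \emph{not} $MIP^{\star}$ in general, and your justification via $b^{mn}=(b^{m})^{n}$ does not work: that identity says nothing about the product $b^{m}\cdot b^{n}$, which is what membership in a multiplicative idempotent would require. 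The relevant identity is $b^{m+n}=b^{m}b^{n}$, which shows that $n\mapsto b^{n}$ pulls multiplicative IP structure back to \emph{additive} IP structure; the correct statement (Sisto's Lemma 13, which the paper quotes) is that $\left\{ n:b^{n}\in B\right\}$ is an $AIP^{\star}$ set. An arbitrary multiplicative idempotent --- minimal or not --- need not contain a given $AIP^{\star}$ set (for instance, a multiplicative idempotent containing $FP\left(\langle 2^{2^{n}}\rangle\right)$ lives on the powers of $2$, which contain no additive IP set), so with your choice of $p$ the $EXP_2$ constraint sets may simply fail to be in $p$ and the recursion stalls. Your hedge ``or at least belong to $p$'' is precisely the gap.

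The missing idea is the choice of the ultrafilter: one must take $p\in cl\left(E\left(\beta\mathbb{N},+\right)\right)\cap E\left(\beta\mathbb{N},\cdot\right)$. This intersection is nonempty because $cl\left(E\left(\beta\mathbb{N},+\right)\right)$ is a closed left ideal of $\left(\beta\mathbb{N},\cdot\right)$, hence contains a minimal left ideal, to which Ellis's theorem applies. Such a $p$ is a multiplicative idempotent (so it contains $A^{\star}$, all the $FP$ bookkeeping sets, and every $MIP^{\star}$ set such as $\left\{ n:n^{z_{1}}\in A\right\}$ for $z_{1}\in EXP_{1}$), and, being a limit of additive idempotents, it also contains every $AIP^{\star}$ set, in particular $\left\{ m:z_{2}^{m}\in A\right\}$ for each $z_{2}\in EXP_{2}$. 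With that single change your recursion goes through essentially as you describe (note also that in $EXP_{2}$ the base carries the smallest index, so the new element $x_{N+1}$ only ever enters as a new factor of the exponent, $z_{2}^{x_{N+1}}$, not as a new base over old exponents; the extra constraints $x_{N+1}^{e}\in A$ you list are not needed).
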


\section*{Acknowledgement:} We are thankful to Sourav Kanti Patra for discussions in several occasions.

\section{Our rseults}

\subsection{Proof of Theorem \ref{mipnew}}
Ellis theorem \cite[Theorem 2.5]{key-2} tells us about the existence of idempotents in topological
semigroups. It says that every compact Hausdorff right topological
semigroup contains idempotents. It is a routine exercise to prove
that $cl\left(E\left(\beta\mathbb{N},+\right)\right)$ is a left ideal
of $\left(\beta\mathbb{N},\cdot\right).$ As left ideals contain minimal left ideals and these are closed,
we can apply Ellis theorem to conclude that $cl\left(E\left(\beta\mathbb{N},+\right)\right)\bigcap E\left(\beta\mathbb{N},\cdot\right)\neq\emptyset.$
To prove Theorem \ref{mipnew}, we will rely on the elements of $cl\left(E\left(\beta\mathbb{N},+\right)\right)\bigcap E\left(\beta\mathbb{N},\cdot\right).$
\begin{proof}[\textbf{Proof of Theorem \ref{mipnew}:}]
 Let $p\in cl\left(E\left(\beta\mathbb{N},+\right)\right)\bigcap E\left(\beta\mathbb{N},\cdot\right)$,
and $A$ be a $MIP^{\star}$set. As $A\in p$, and $p=p\cdot p,$ denote by $A^{\star}=\left\{ x\in A:x^{-1}A\in p\right\} \in p$.
Choose $x_{1}\in A^{\star}$. As $A$ is a $MIP^{\star}$ set, we have $B_{1}=\left\{ n:n^{x_{1}}\in A\right\} $
is a $MIP^{\star}$ set. Also by \cite[Lemma 13]{key-3}, the set $C_{1}=\left\{ m:x_{1}^{m}\in A\right\} $
is $AIP^{\star}$ set. Set $$D_{1}=B_{1}\cap A^{\star}\cap x_{1}^{-1}A^{\star}\in p.$$

As $p\in cl\left(E\left(\beta\mathbb{N},+\right)\right)$, we have
$C_{1}\cap D_{1}\neq\emptyset$. Let $x_{2}\in C_{1}\cap D_{1}$.
Then $x_{2}\in B_{1}\text{ and this implies }x_{2}^{x_{1}}\in A$. As $x_{2}\in C_{1}$ and so $x_{1}^{x_{2}}\in A$.
Again $x_{2}\in A^{\star}\cap x_{1}^{-1}A^{\star}$, this implies
$\left\{ x_{1},x_{2},x_{1}x_{2}\right\} \subset A.$ 
Hence $\{x_1^{x_2},x_2^{x_1}\}\subset A$, and $\{x_1,x_2,x_1x_2\}\subset A^\star$.

Inductively assume
that for some $N\in\mathbb{N}$, we have $x_{1},x_{2},\ldots,x_{N}$
such that
\begin{enumerate}
\item $EXP_{1}\left(\langle x_{n}\rangle_{n=1}^{N}\right)\bigcup EXP_{2}\left(\langle x_{n}\rangle_{n=1}^{N}\right)\subset A$
and 
\item $FP\left(\langle x_{n}\rangle_{n=1}^{N}\right)\subset A^{\star}$. 
\end{enumerate}
For each $z_{1}\in EXP_{1}\left(\langle x_{n}\rangle_{n=1}^{N}\right)$,
let $B_{z_{1}}=\left\{ n:n^{z_{1}}\in A\right\} $ is a $MIP^{\star}$set.
For each $z_{2}\in EXP_{2}\left(\langle x_{n}\rangle_{n=1}^{N}\right)$,
let $C_{z_{2}}=\left\{ m:z_{2}^{m}\in A\right\} $ is an $AIP^{\star}$set.
Hence $\bigcap_{z_{2}\in EXP_{2}\left(\langle x_{n}\rangle_{n=1}^{N}\right)}C_{z_{2}}$
is an $AIP^{\star}$ set. So 
\[
D_{n+1}=\bigcap_{z_{1}\in EXP_{1}\left(\langle x_{n}\rangle_{n=1}^{N}\right)}B_{z_{1}}\cap A^{\star}\cap\bigcap_{y\in FP\left(\langle x_{n}\rangle_{n=1}^{N}\right)}y^{-1}A^{\star}\in p.
\]
Again $p\in cl\left(E\left(\beta\mathbb{N},+\right)\right)$, hence
$E_{N+1}=\bigcap_{z_{2}\in EXP_{2}\left(\langle x_{n}\rangle_{n=1}^{N}\right)}C_{z_{2}}\cap D_{N+1}\neq\emptyset$,
and let $x_{n+1}\in E_{N+1}$. Then, $x_{N+1}^{z}\in A$ for all $z\in EXP_{1}\left(\langle x_{i}\rangle_{i=1}^{N}\right)$,
and $y^{x_{N+1}}\in A$ for all $y\in EXP_{2}\left(\langle x_{i}\rangle_{i=1}^{N}\right)$.
Again $x_{N+1}\in A^{\star}\cap\bigcap_{y\in FP\left(\langle x_{n}\rangle_{n=1}^{N}\right)}y^{-1}A^{\star}$
implies $FP\left(\langle x_{i}\rangle_{i=1}^{N}\right)\subset A^{\star}$.

Hence we have
\begin{enumerate}
\item $EXP_{1}\left(\langle x_{n}\rangle_{n=1}^{N+1}\right)\bigcup EXP_{2}\left(\langle x_{n}\rangle_{n=1}^{N+1}\right)\subset A,$
and 
\item $FP\left(\langle x_{n}\rangle_{n=1}^{N+1}\right)\subset A^{\star}$. 
\end{enumerate}
This completes the induction.
\end{proof}

\subsection{$MIP^\star$ sets not coming from dynamics}
If $A$ is an $MIP^{\star}$ set, then for any prime $q,$ the set
$B=\left\{ n:q^{n}\in A\right\} $ is an $AIP^{\star}$ set. For any
$n\in\mathbb{N}$, and $X\subseteq\mathbb{N}$, define $n^{X}=\left\{ n^{x}:x\in X\right\} .$
Then we can write $A=q^{B}\cup C,$ where $q^{B}\cap C=\emptyset.$
If we choose $D$ to be any $AIP^{\star}$ set, then it will be easy
to prove that $q^{B\cap D}\cup C$ is also a $MIP^{\star}$ set.
\begin{lem}
\label{ess} Let $q$ be a prime number. Then for any given $MIP^{\star}$
set $B$, we can find a $MIP^{\star}$ set $A\subseteq B$ such that for all $n\in\mathbb{N}$,
$\nicefrac{A}{q^{n}}$is not $MIP^{\star}$ set.
\end{lem}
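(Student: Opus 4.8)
The plan is to reduce the lemma, via the remark just before the statement, to the construction of one additive $IP^{\star}$ set every forward translate of which fails to be $IP^{\star}$, and then to carry out that construction. Put $E=\{k\in\N:q^{k}\in B\}$, which is $AIP^{\star}$ by \cite[Lemma 13]{key-3}, and $C=B\setminus\{q^{k}:k\in\N\}$, so $B=\{q^{k}:k\in E\}\sqcup C$. As observed before the statement, for \emph{every} $AIP^{\star}$ set $D$ the set $A_{D}:=\{q^{k}:k\in E\cap D\}\cup C$ is an $MIP^{\star}$ set with $A_{D}\subseteq B$. So it suffices to build an $AIP^{\star}$ set $D$ with $D-n:=\{m\in\N:m+n\in D\}$ not $AIP^{\star}$ for every $n\ge1$: fixing for each such $n$ an $IP$ set $I_{n}$ with $I_{n}\cap(D-n)=\emptyset$, i.e. $(I_{n}+n)\cap D=\emptyset$, the set $\{q^{m}:m\in I_{n}\}$ is a multiplicative $IP$ set, and since $m+n\notin D$ for $m\in I_{n}$ we get $q^{m+n}\notin A_{D}$, hence $q^{m}\notin A_{D}/q^{n}$; thus $A_{D}/q^{n}$ misses this $MIP$ set and is not $MIP^{\star}$. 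Then take $A=A_{D}$.

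\textbf{Construction of $D$.} Use the super-lacunary sequence $\beta_{1}=10$, $\beta_{i+1}=2^{\beta_{i}}$, and the bijection $\pi(n,k)=2^{\,n-1}(2k-1)$ of $\N\times\N$ onto $\N$; it is increasing in $k$ and has $v_{2}(\pi(n,k))=n-1$ and $\beta_{\pi(n,1)}>\log_{2}n$ for all $n,k$ (here $v_{2}$ denotes the $2$-adic valuation). For $n\ge1$ let $I_{n}=FS\bigl(\langle 2^{\beta_{\pi(n,k)}}\rangle_{k\ge1}\bigr)$, an $IP$ set, and set
\[
S=\bigcup_{n\ge1}\bigl(I_{n}+n\bigr),\qquad D=\N\setminus S .
\]
Since $I_{n}+n\subseteq S$, we have $I_{n}\cap(D-n)=\emptyset$ for all $n\ge1$, so the only remaining task is to show that $D$ is $AIP^{\star}$, i.e. that $S$ contains no $IP$ set.

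\textbf{Why $S$ contains no $IP$ set.} Write $\operatorname{supp}(m)$ for the set of $1$-positions in the binary expansion of $m$. A routine first step shows that each $x\in S$ lies in a \emph{unique} block $I_{n}+n$, and that then $\operatorname{supp}(x)=\operatorname{supp}(n)\sqcup\{\beta_{\pi(n,k)}:k\in F\}$ for some finite nonempty $F$ with $\operatorname{supp}(n)$ lying entirely below $\beta_{\pi(n,1)}$; in particular the top bit of $x$ is at a $\beta$-position whose index has $2$-adic valuation $n-1$, so $n=1+\max\{v_{2}(i):\beta_{i}\in\operatorname{supp}(x)\}$ recovers the block. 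Now assume $FS(\langle y_{j}\rangle_{j\ge1})\subseteq S$. Since $\beta_{i}+1$ is never a $\beta$-position, whenever $x,y,x+y\in S$ and the top bit of $x$ is $\le$ that of $y$, the top bit of $x+y$ — which lies in $\{\,\text{top bit of }y,\ \text{top bit of }y+1\,\}$ and has to be a $\beta$-position — equals the top bit of $y$, hence $x+y$ and $y$ lie in the same block. Applying this to $y_{1},y_{2}$ (relabelled so that $y_{1}$ has the smaller top bit), $y_{1}+y_{2}$ and $y_{2}$ share a block $I_{n}+n$; writing $y_{1}+y_{2}=n+w$, $y_{2}=n+w'$ with $w,w'\in I_{n}$ and $w>w'>0$ gives $y_{1}=w-w'$. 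But a difference $w-w'$ of distinct $I_{n}$-elements is never in $S$: if $\operatorname{supp}(w')\subseteq\operatorname{supp}(w)$ then $w-w'\in I_{n}$, and $I_{n}\cap S=\emptyset$ because an $I_{n}$-element has support entirely at $\beta$-positions $\ge\beta_{\pi(n,1)}$ while every $S$-element contributes the nonempty low block $\operatorname{supp}(n)$; otherwise, with $k^{\star}$ the largest index at which the binary digit of $w$ exceeds that of $w'$, super-lacunarity pins $w-w'$ to one of two shapes, $2^{\beta_{\pi(n,k^{\star})}}+r$ or $2^{\beta_{\pi(n,k^{\star})}}-|r|$ with $0\le|r|<2^{\beta_{\pi(n,k^{\star})}-1}$ — in the first the top bit $\beta_{\pi(n,k^{\star})}$ forces $w-w'$ into block $n$, yet $w-w'$ is divisible by $2^{\beta_{\pi(n,1)}}$ whereas every block-$n$ element has smaller $2$-adic valuation; in the second the top bit is the non-$\beta$-position $\beta_{\pi(n,k^{\star})}-1$; both cases contradict $w-w'\in S$. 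Hence $y_{1}\notin S$, a contradiction, so $S$ contains no $IP$ set and $D$ is $AIP^{\star}$.

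\textbf{Main obstacle.} The only genuine difficulty is this last step. As the shift $n$ in $I_{n}+n$ runs over all of $\N$, one cannot finitely colour an $IP$ set inside $S$ by "which block it meets" and invoke Hindman's theorem, nor pass to arithmetic modulo one fixed power of $2$. The construction circumvents this by encoding $n$ into the $2$-adic valuations of the exponents used by $I_{n}$ — the factor $2^{\,n-1}$ in $\pi$ — together with the extreme lacunarity of $\langle\beta_{i}\rangle$; these let one read off the block of any element of $S$ and keep a single addition under control. The remaining items (the recovery formula, uniqueness of blocks, and the two binary-expansion cases) are routine bookkeeping that I would supply in full.
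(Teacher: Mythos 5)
Your argument is correct, and its skeleton is exactly the paper's: decompose $B$ into the powers of $q$ it contains (whose exponent set is $AIP^{\star}$ by Sisto's Lemma 13) plus the remainder, intersect that exponent set with an additive $IP^{\star}$ set none of whose backward translates is $IP^{\star}$, and note that $A/q^{n}$ then misses a multiplicative $IP$ set of the form $FP\bigl(\langle q^{a_{k}}\rangle_{k}\bigr)$. Where you genuinely diverge is in sourcing the additive ingredient: the paper simply cites \cite[Theorem 16.32]{key-2} for the existence of an $AIP^{\star}$ set $F$ with $-n+F$ never $AIP^{\star}$, whereas you construct such a set explicitly. Your construction checks out: since $\pi(n,k)=2^{n-1}(2k-1)$ puts $v_{2}=n-1$ on every index used by $I_{n}$ and $n<2^{\beta_{\pi(n,1)}}$, the binary support of an element of $I_{n}+n$ splits cleanly into $\operatorname{supp}(n)$ and genuine $\beta$-positions with no carries, the block is recoverable from the top bit alone, the "top bit of a sum must be a $\beta$-position, hence equals the larger summand's top bit" step is sound because $\beta_{i}+1$ is never a $\beta$-position, and the two shapes of $w-w'$ (forced into block $n$ but with $2$-adic valuation $\ge\beta_{\pi(n,1)}$, versus top bit at the non-$\beta$-position $\beta_{\pi(n,k^{\star})}-1$) are both genuinely excluded; the deferred estimates are routine given $\beta_{i+1}=2^{\beta_{i}}$. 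The trade-off is the obvious one: the paper's citation gives a three-line proof, while your version is self-contained and exhibits a concrete witness, at the cost of a page of binary-expansion bookkeeping that in effect reproves the cited Hindman--Strauss theorem.
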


\begin{proof}
Given a $MIP^{\star}$ set $B$ and a prime $q,$ we have an $AIP^{\star}$
set $C$ such that $B=p^{C}\cup D.$ From \cite[Theorem 16.32]{key-2},
we know that there exists an $AIP^{\star}$ set $F$ such that for
all $n\in\mathbb{N}$, $-n+F$ is not an $AIP^{\star}$ set. Let $E=F\cap C,$
and consider $A=p^{E}\cup D$. As $E$ is an $AIP^{\star}$ set, we have $A$
is a $MIP^{\star}$ set.

We claim that for any $n\in\mathbb{N}$, $\nicefrac{A}{q^{n}}$ is
not $MIP^{\star}$ set. 

To prove our claim, if possible assume for some $n\in\mathbb{N}$,
$\nicefrac{A}{q^{n}}$ is a $MIP^{\star}$ set. Then $G=\left\{ m:q^{m}\in\nicefrac{A}{q^{n}}\right\} $
is $AIP^{\star}$ set. But by our choice $G=-n+E$ is not an $AIP^{\star}$
set. Hence $\nicefrac{A}{q^{n}}$ is not a $MIP^{\star}$
set.
\end{proof}
Certain $IP^{\star}$ sets arise from the recurrence of the measurable
dynamical systems. Such sets are known as dynamical $IP^{\star}$
sets. These sets are defined as follows.
\begin{defn}
\cite[Definition 19.34]{key-2} Let $(S,\cdot)$ be a semigroup, and
a subset $C$ of $S$ is called a dynamical $IP^{\star}$ set if and
only if there exists a measure preserving system $\left(X,\mathcal{B},\mu,\langle T_{s}\rangle_{s\in S}\right),$
and an $A\in\mathcal{B}$ with $\mu\left(A\right)>0$ such that $\text{\ensuremath{\left\{  s\in S:\mu\left(A\cap T_{s}^{-1}\left[A\right]\right)>0\right\} } \ensuremath{\subseteq C} }.$
\end{defn}

The following theorem says that infinitely many translations dynamical
$IP^{\star}$ sets are always dynamical $IP^{\star}.$
\begin{thm}
\label{cite} Let $B$ be a dynamical $MIP^{\star}$ set of $\left(\mathbb{N},\cdot\right)$.
Then there is a dynamical $MIP^{\star}$ set $C\subseteq B$ such
that for each $n\in C$, $\nicefrac{C}{n}$ is a dynamical $MIP^{\star}$
set (and hence $\nicefrac{B}{n}$ is a dynamical $MIP^{\star}$ set).
\end{thm}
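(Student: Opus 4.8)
The statement to prove is Theorem~\ref{cite}, which asserts that a dynamical $MIP^\star$ set $B$ of $(\mathbb{N},\cdot)$ contains a dynamical $MIP^\star$ subset $C$ all of whose divisions $\nicefrac{C}{n}$ (for $n\in C$) remain dynamical $MIP^\star$.

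The plan is to imitate the classical argument for dynamical additive $IP^\star$ sets found in \cite[Theorem 19.35]{key-2} (or its analogue), transported to the multiplicative semigroup. First I would unpack the definition: since $B$ is dynamical $MIP^\star$, there is a measure preserving system $\left(X,\mathcal{B},\mu,\langle T_n\rangle_{n\in\mathbb{N}}\right)$ with $T_{mn}=T_mT_n$, and a set $A\in\mathcal{B}$ with $\mu(A)>0$ such that $R_A := \left\{n\in\mathbb{N}:\mu\left(A\cap T_n^{-1}[A]\right)>0\right\}\subseteq B$. The natural candidate is to iterate the recurrence once: consider the set of ``double return times.'' Concretely, I would look at $A' = A\cap T_n^{-1}[A]$ for a suitable recurrence instant, or better, pass to a set of positive measure that returns to $A$ and define $C$ as the return-time set of that smaller set.

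The key construction: by the multiplicative Poincaré recurrence (which holds because $(\mathbb{N},\cdot)\cong(\mathbb{N},+)^{(\omega)}$ as an IP-recurrence situation, or simply because $E(\beta\mathbb{N},\cdot)\neq\emptyset$ gives us recurrence along an IP set), choose a point or rather fix the observation that for a measure preserving system the set $R_A$ is not just a $MIP^\star$ set but can be ``refined.'' The standard trick is: set $C = R_A\cap B$ — no, more carefully, one shows $C := \left\{n : \mu\left(A\cap T_n^{-1}[A]\right) > \tfrac{1}{2}\mu(A)\right\}$ won't quite work for the division property. Instead, the right move is to realize $\nicefrac{C}{n}$ as the return-time set of the \emph{system shifted by $T_n$}: if $n\in C$ witnesses $\mu(A\cap T_n^{-1}[A])>0$, put $A_n = A\cap T_n^{-1}[A]$, and observe that for $m$ with $\mu(A_n\cap T_m^{-1}[A_n])>0$ we get $nm\in R_A$ (since $T_{nm}^{-1}[A]\supseteq T_m^{-1}T_n^{-1}[A]\supseteq T_m^{-1}[A_n]$ and $A_n\subseteq A$), hence $m\in\nicefrac{R_A}{n}$. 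So the return-time set of $(X,\mathcal{B},\mu,\langle T_k\rangle, A_n)$ is contained in $\nicefrac{R_A}{n}\subseteq\nicefrac{B}{n}$, making $\nicefrac{B}{n}$ dynamical $MIP^\star$. To get the self-referential statement $\nicefrac{C}{n}$ dynamical $MIP^\star$ for $n\in C$, one takes $C = R_A$ itself (so $C\subseteq B$) and notes the above argument shows $\nicefrac{C}{n}\supseteq R_{A_n}$, a dynamical $MIP^\star$ set; hence $\nicefrac{C}{n}$ is dynamical $MIP^\star$. The containment $C\subseteq B$ is immediate from the definition applied to $B$.

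The main obstacle I anticipate is ensuring the system $\langle T_n\rangle_{n\in\mathbb{N}}$ really is a genuine \emph{action} of the semigroup $(\mathbb{N},\cdot)$ so that $T_{nm}=T_nT_m$, which is exactly what makes the inclusion $T_{nm}^{-1}[A]\supseteq T_m^{-1}[A_n]$ valid; this is built into Definition~19.34 as quoted, so it should be available, but one must be careful that $\mathbb{N}$ here is treated with $1$ as identity (or handle the lack of identity by working in $\beta\mathbb{N}$ or adjoining one). A secondary technical point is that $\nicefrac{C}{n}$ must be shown to be genuinely a dynamical $MIP^\star$ set, i.e.\ its defining return-time set sits inside it — but since $R_{A_n}\subseteq\nicefrac{C}{n}$ with $\mu(A_n)>0$, the \emph{same} system $(X,\mathcal{B},\mu,\langle T_k\rangle)$ with the set $A_n$ witnesses this directly, so no new system needs to be constructed. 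I would close by remarking that the parenthetical ``and hence $\nicefrac{B}{n}$ is dynamical $MIP^\star$'' follows from $\nicefrac{C}{n}\subseteq\nicefrac{B}{n}$ together with the fact that a superset of a dynamical $MIP^\star$ set is dynamical $MIP^\star$.
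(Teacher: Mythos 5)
Your proposal is correct and is essentially the paper's own proof: the paper simply defers to the argument of \cite[Theorem 19.35]{key-2}, and what you have written out (take $C=R_A$, and for $n\in C$ witness $\nicefrac{C}{n}$ via the return-time set of $A_n=A\cap T_n^{-1}[A]$, using $T_{nm}^{-1}[A]\supseteq T_m^{-1}[A_n]$) is exactly that argument transported to $(\mathbb{N},\cdot)$. No substantive difference.
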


\begin{proof}
Same as the proof of \cite[Theorem 19.35]{key-2}.
\end{proof}

Now we are in the position to construct a $MIP^\star$ set which is not dynamical.
\begin{thm}
There exists a $MIP^{\star}$ set in $\left(\mathbb{N},\cdot\right)$
which is not dynamical $MIP^{\star}$ set.
\end{thm}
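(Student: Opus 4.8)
The plan is to imitate, in the multiplicative world, the classical Hindman--Strauss construction of an additive $IP^{\star}$ set that is not dynamical, using as input the two facts already available: Lemma \ref{ess}, which for a fixed prime $q$ produces a $MIP^{\star}$ set $A$ with $\nicefrac{A}{q^{n}}$ never $MIP^{\star}$, and Theorem \ref{cite}, which says every dynamical $MIP^{\star}$ set can be shrunk to a dynamical $MIP^{\star}$ subset $C$ such that $\nicefrac{C}{n}$ (hence any superset of it, in particular $\nicefrac{B}{n}$) is again dynamical $MIP^{\star}$ for each $n\in C$. The tension between ``all divisions survive'' and ``no division by a power of $q$ survives'' is what yields the example.

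Concretely, first I would fix a prime $q$ and apply Lemma \ref{ess} with the trivial $MIP^{\star}$ set $\mathbb{N}$ to obtain a $MIP^{\star}$ set $A$ with $\nicefrac{A}{q^{n}}$ not $MIP^{\star}$ for every $n\in\mathbb{N}$; this $A$ is the claimed example, so it remains to show it is not dynamical $MIP^{\star}$. Suppose for contradiction that it is. Applying Theorem \ref{cite} to $A$, I get a dynamical $MIP^{\star}$ set $C\subseteq A$ such that $\nicefrac{A}{n}$ is dynamical $MIP^{\star}$ for every $n\in C$. Now $C$, being dynamical $MIP^{\star}$, is in particular $MIP^{\star}$ (every dynamical $IP^{\star}$ set is $IP^{\star}$), and $\{q^{m}:m\geq 1\}$ is a multiplicative $IP$ set --- for instance $\{q^{m}:m\geq1\}=FP\!\left(\langle q^{2^{n-1}}\rangle_{n\in\mathbb{N}}\right)$ by uniqueness of binary expansions --- so $C$ must contain $q^{k}$ for some $k\geq 1$. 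Then $\nicefrac{A}{q^{k}}$ is dynamical $MIP^{\star}$, hence $MIP^{\star}$, contradicting the choice of $A$. Therefore $A$ is $MIP^{\star}$ but not dynamical $MIP^{\star}$.

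The computational content here is essentially nil once Lemma \ref{ess} and Theorem \ref{cite} are in hand; the only point that needs care is guaranteeing that the element of $C$ by which we divide is a genuine power of the prime $q$, which is exactly why we route through the multiplicative $IP$ set of powers of $q$ and use that $C$ meets every such set. I would also make explicit the two standard facts invoked: that a dynamical $MIP^{\star}$ set is a $MIP^{\star}$ set, and that a superset of a dynamical $MIP^{\star}$ set is dynamical $MIP^{\star}$ (this last one justifies the parenthetical ``hence $\nicefrac{B}{n}$ is a dynamical $MIP^{\star}$ set'' in Theorem \ref{cite}); both follow at once from the definition of a dynamical $IP^{\star}$ set. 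If anything is a mild obstacle, it is simply verifying that $\{q^{m}:m\ge 1\}$ genuinely fits the definition of a multiplicative $IP$ set (injectivity of the generating sequence), which the binary-expansion description handles.
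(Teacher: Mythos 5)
Your proposal is correct and follows essentially the same route as the paper: take the set $A$ from Lemma \ref{ess}, assume it is dynamical, use Theorem \ref{cite} to produce a dynamical $MIP^{\star}$ set that must meet the multiplicative $IP$ set of powers of $q$, and derive a contradiction with the non-$MIP^{\star}$-ness of $\nicefrac{A}{q^{k}}$. Your write-up is in fact a bit more careful than the paper's (you justify that $\left\{ q^{m}:m\geq1\right\} $ is a multiplicative $IP$ set and make explicit the superset and ``dynamical implies $MIP^{\star}$'' facts), but the underlying argument is identical.
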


\begin{proof}
Let $q$ be a prime and $A$ be a $MIP^{\star}$ set as in the proof
of Lemma \ref{ess}. If possible, assume that $A$ is also a dynamical
$MIP^{\star}$ set. Then from Theorem \ref{cite}, we have $F=\left\{ n:\nicefrac{A}{n}\right.$ is a dynamical $\left. MIP^{\star}\text{ set}\right\} $
is also a dynamical $MIP^{\star}$ set.

As $F$ is a dynamical $MIP^{\star}$ set, and $FP\left(\langle q^{n}\rangle_{n\in\mathbb{N}}\right)\cap F\neq\emptyset$,
we have some $k\in\mathbb{N}$ such that $q^{k}\in F.$ Hence $\nicefrac{F}{p^{k}}$
is $MIP^{\star}$ set. But by the construction of $A$ in Lemma
\ref{ess}, $\nicefrac{A}{q^{k}}$ is not a $MIP^{\star}$ set $A$.
Hence $A$ is a $MIP^{\star}$ set but not a dynamical $MIP^{\star}$set.

\end{proof}

\end{document}